\newtheorem{thm}{Theorem}
\newtheorem{definition}{Definition}
\title{Analysis of a new type of fractional linear multistep method of order two  with improved stability}
\author{H. M. Nasir$^1$ \and Khadija Al Hasani$^2$}
\date{%
    $^1$FracDiff Research Group, Department of Mathematics, Sultan Qaboos University, Sultanate of Oman  \\%
    $^2$ Department of Mathematics, Sultan Qaboos University, Sultanate of Oman\\[2ex]%
    \today
}
\begin{document}

\maketitle

\begin{abstract}
We present and investigate a new type of implicit fractional linear multistep method of order two for fractional initial value problems. The method is obtained from the
second order super convergence of the Gr\"unwald-Letnikov approximation of the fractional derivative at a non-integer shift point. The proposed method is of order two consistency and coincides with the backward difference method of
order two for classical initial value problems when the order of the derivative is one. The weight coefficients of the proposed method are obtained from the Gr\"unwald weights and hence computationally efficient compared with that of the fractional backward difference formula of order two. The stability properties are analyzed and shown that the stability region of the method is larger than that of the fractional Adams-Moulton method of order two and the fractional trapezoidal method.
Numerical result and illustrations are presented to justify the analytical theories.
\end{abstract}

{\bf Keywords: }
Fractional derivative, Gr\"unwald approximation, super convergence,  Generating functions, fractional Adams-Moulton methods, stability regions

{\bf Subject Classification:} 26A33, 34A08, 34D20, 65L05, 65L20

\renewcommand{\thefootnote}{\fnsymbol{footnote}}

\parindent 0pt
\parskip 5pt

\section{Introduction}

Consider the fractional initial value problem (FIVP)
\begin{subequations}\label{Eq:FInit}
\begin{align}
\;^C_{t_0} D^\beta_t y(t) &= f(t,y(t)), \quad t \ge t_0, \quad 0<\beta \le 1,\label{Eq_FIVP}\\
y(t_0) &=  y_0 ,\label{Eq_FInit}
\end{align}
\end{subequations}

where $\;^C_{t_0} D^\beta_t$ is the left Caputo fractional derivative operator defined in Section \ref{Sec_prem}, $f(t,y)$  is a source function
satisfying Lipschitz condition in the second argument $y$ guaranteeing a unique solution to the
problem \cite{diethelm2010analysis}.

Fractional calculus, despite its long history, have only recently gained places in science, engineering, artificial intelligence and many other fields.

Many numerical methods have been developed in the recent past for solving \eqref{Eq:FInit}
approximately. We are interested in the numerical methods of type
commonly known as the fractional linear multistep methods
(FLMM).

The basic numerical method of FLMM type of order one for \eqref{Eq:FInit} is obtained from the Gr\"unwald-Letnikov form for the fractional derivative \cite{podlubny1998fractional, oldham1974fractional}. The weight coefficients for this basic FLMM are the {\it Gr\"unwald weights} obtained from the series of the generating function $(1-z)^\beta $.

Lubich \cite{lubich1985fractional} introduced a set of higher order FLMMs as convolution quadratures for the Volterra integral equation
(VIE) obtained by reformulating \eqref{Eq:FInit} (See also eg. \cite{diethelm2010analysis}). The quadrature coefficients are obtained from the fractional order power of the rational polynomial obtained from the generating polynomials of linear multistep method (LMM) for classical initial value problems (IVPs).
As a particular subfamily of these FLMMs, the fractional
backward difference formulas (FBDFs) were also proposed by Lubich in \cite{lubich1986discretized}.
Another particular form of FLMM type is the fractional trapezoidal
method of order 2.

Several authors have utilized these formulations to construct
variations of the FLMMs, see eg. \cite{galeone2006multistep} and the references therein.
Galeone and Garrappa \cite{galeone2008fractional} studied
some implicit FLMMs generalizing the Adams-Moulton methods
for classical IVPs.
Galeone and Garrappa \cite{galeone2009explicit} and Garrappa \cite{garrappa2009some} have investigated a set of explicit FLMMs
generalizing the Adams-Bashforth methods.

In \cite{BrazilSymp2022}, the authors constructed a new type of FLMM of order 2 that does not fall under the above mentioned subfamilies of FLMMs and presented an extended abstract in
\cite{Brazil2022}.

In this paper,
we analyse the method for computation and stability, and present an algorithm. We also compare the method with other known FLMMs of order 2 and show that the presented method outweighs the other methods in stability and/or computational efficiency.

This paper is organized as follows. In Section \ref{Sec_prem}, the preliminaries and previous relevant works are summarized.  In Section \ref{Sec_New_FLMM}, the new FLMM
of order 2 is introduced along with a computational algorithm. Numerical examples for testing the method are given in
Section \ref{Sec_Numer_Test}. In Section \ref{Sec_Anal_Stab},
the stability of the method is analysed. In Sections \ref{Sec_Compare}, the new method is compared with other
FLMMs and Section \ref{Sec_Conclusion} draws some conclusions.

\section{Preliminaries}\label{Sec_prem}

The Riemann-Liouville fractional integral of order $\beta>0 $ of a function $f(t)$ in an interval domain $[t_0, T) $ ( $T$ can also be infinity)  is defined as
\begin{equation}\label{Eq_RL_Integral}
  J_t^\beta f(t) =  \frac{1}{\Gamma(\beta)} \int_{t_0}^{t} (t-\tau)^{\beta-1} f(\tau) d\tau,
\end{equation}
where $\Gamma(\cdot) $ denotes the Euler-Gamma function.

For a sufficiently smooth function $y(t) $ defined for  $ t\ge t_0$, the left Riemann-Liouville (RL) fractional derivative of order $\beta>0$ is defined by (see eg. \cite{podlubny1998fractional})
\begin{equation}\label{Eq_R_L}
\;^{RL}_{t_0} D^\beta_t y(t) = \frac{1}{\Gamma(m-\beta)} \frac{d^m}{dx^m}
\int_{t_0}^t \frac{y(\tau)}{(t-\tau)^{\beta - m +1}} d\tau, \quad  m-1 <\beta \le m,
\end{equation}
where $ m = \lceil \beta \rceil $ -- the smallest integer larger than or equal to $\beta $.

The left Caputo fractional derivative of order $\beta > 0$ is defined as
\begin{equation}\label{Eq_Caputo}
\;^{C}_{t_0} D^\beta_t y(t)  = J^{m-\beta}_t y^{(m)}(t) =  \frac{1}{\Gamma(m-\beta)}
\int_{t_0}^t \frac{y^{(m)} (\tau)}{(t-\tau)^{\beta - m +1}} d\tau, \quad m-1 <\beta \le  m,
\end{equation}
where $y^{(m)} $ is the $m$-th  derivative of $y$.

Often, for practical reasons, the integer ceiling $m$ of the fractional order $\beta$ is considered to be one or two. In this paper, we investigate \eqref{Eq:FInit} for the case $ 0 <\beta \le 1$ when $m = 1$.

In addition to the above two definitions, the Gr\"unwald-Letnikov(GL) definition is useful for numerical approximations of fractional derivatives.
\begin{equation}\label{Eq_GL}
  \,^{GL}_{t_0} D^\beta_t y(t) =  \lim_{h \rightarrow 0} \frac{1}{h^\beta} \sum_{k=0}^\infty g_k^{(\beta)} y(t-kh) ,
\end{equation}
where $g_k^{(\beta)} = (-1)^k\frac{ \Gamma(\beta+1)}{\Gamma(\beta - k +1) k! } $  are the  {\it Gr\"unwald weights}  and are the coefficients of the series expansion of the {\it Gr\"unwald generating function}
\begin{equation}\label{Eq_GGen}
  \omega_1(z) = (1-z)^\beta = \sum_{k=0}^\infty g_k^{(\beta)} z^k.
\end{equation}
The coefficients can be successively computed by the recurrence relation
\[
g_0^{(\beta)}  = 1, \qquad
g_k^{(\beta)}  = \left( 1 - \frac{\beta +1}{k}\right)g_{k-1}^{(\beta)} ,
\quad k = 1, 2,...\;.
\]

For theoretical purposes, the function $ y (t) $ has been zero extended for $ t<t_0 $
and hence the infinite summation in the GL definition \eqref{Eq_GL}.
Practically, the upper limit of the sum is
$ n = [(t-t_0)/h]  $, where $[x] $ denotes the integer part of $x$.

The  three definitions in \eqref{Eq_R_L}--\eqref{Eq_GL} are equivalent under homogeneous derivative conditions at the initial point $t_0$ \cite{podlubny1998fractional}.

\subsection{Approximation of fractional integrals and derivatives}
Numerical approximation of the fractional
integral \eqref{Eq_RL_Integral} is commonly
considered via convolution quadrature formulas of the form
\begin{equation}\label{Eq:Conv_Quad}
  J_h^\beta f(t)  = h^\beta \sum_{k=0}^{n} \omega_k f_{n-k},
\end{equation}
where the interval $[t_0,t] $ is discretized by the points set $\{ t_0, t_1, ...,t_n\} $ with
$ t_k = t_0 + kh, f_k = f(t_k),$ for $ k = 0,1,...,n, $ and $   h = (t-t_0)/n $.
The weights $\omega_k $ are from the quadrature rule applied.

For numerical approximation of the fractional derivative, the GL definition is
commonly used by dropping the limit in \eqref{Eq_GL} resulting in the Gr\"unwald  approximation (GA)  for a fixed step $ h $ \cite{oldham1974fractional}.

\begin{equation}\label{Eq_GA}
   \delta_{h}^{\beta}  y(t) := \frac{1}{h^\beta} \sum_{k=0}^\infty g_k^{(\beta)} y(t-k h) .
\end{equation}

It is known that the GA is of order one \cite{meerschaert2004finite, nasir2013second}.
\begin{equation}\label{Eq_GAOrder}
    \delta_{h}^{\beta}  y(t) := \,^{GL}_{t_0} D^\beta_t y(t) + O(h).
\end{equation}

A more general Gr\"unwald  type approximation is given by the shifted Grunwald approximation (SGA) \cite{meerschaert2004finite}.
\begin{equation}\label{Eq_shiftGA}
   \delta_{h, r}^{\beta}  y(t) = \frac{1}{h^\beta} \sum_{k=0}^\infty g_k^{(\beta)} y(t-(k-r) h),
\end{equation}
where $r$ is the shift parameter.

For an integer shift $r$, the SGA is also of first order approximation
\cite{meerschaert2004finite}.
\[
    \delta_{h, r}^{\beta}  y(t) =  \,^{GL}_{t_0}D^\beta_t y(t) + O(h),   \quad r \in \mathbb{Z}.
\]

However, it is observed that the SGA gives a second order approximation
at a non-integer shift $ r = \beta/2 $ displaying super convergence \cite{nasir2013second}.
\begin{equation}\label{Eq_Super}
    \delta_{h, \beta/2}^{\beta}  y(t) =  \,^{GL}_{t_0}D^\beta_t y(t) + O(h^2).
\end{equation}

Some higher order Gr\"unwald type approximations with shifts were presented in \cite{gunarathna2019explicit} with the weight coefficients obtained from some generating functions given in an explicit form according to the order and shift requirements.

\subsection{Fractional initial value problem}
For $ m-1 < \beta \le m $, the general form of a FIVP is given by
\begin{subequations}\label{Eq_FIVP_t_0}
\begin{align}
\;^{C}_{t_0} D^\beta_t \bar{y}(t) &= \bar{f}(t,\bar{y}(t)),\quad t \ge t_0, \quad m-1 < \beta \le m ,\label{Eq_FIVP1}\\
\bar{y}^{(k)}(t_0) &=  y_0^{(k)}, \quad k = 0, 1,..., m-1 \label{Eq_FInit1}.
\end{align}
\end{subequations}

Without loss of generality, one may consider the FIVP with homogeneous initial conditions at the origin ($t_0 = 0$):
\begin{subequations}\label{Eq_FIVP_0}
\begin{eqnarray}
\,^{RL}_{\;\; 0} D^\beta_t y(t) &=& f(t,y(t)), \quad t \ge  0, \quad m-1 < \beta \le m ,\label{Eq_FIVP2}\\
y^{(k)}(0) &=&  0, \quad  k = 0, 1,..., m-1, \label{Eq_FInit2}
\end{eqnarray}
\end{subequations}
because $ y(t)$ is a solution of \eqref{Eq_FIVP_0} if and only if
\begin{equation} \label{Eq_Sol1}
\bar{y}(t) = y(t-t_0) + T_{m-1} (t-t_0)
\end{equation}
is the solution of \eqref{Eq_FIVP_t_0}, where
\[
T_{m-1} (t) = \sum_{k=0}^{m-1} \frac{t^k}{k!}y_0^{(k)} , \quad
f(t,y) = \bar{f}(t+t_0,\bar{y}(t+t_0)).
\]
\color{black}
For $ 0< \beta \le 1 $, we have $ T_0(t) = y_0$ and $ f(t,y) = \bar{f} (t+t_0, y(t+t_0) +y_0) $.

The problem \eqref{Eq_FIVP_0}  can be equivalently expressed by the VIE of second kind \cite{diethelm2010analysis}
\begin{equation}\label{Eq_VIE}
  y(t) = \frac{1}{\Gamma(\beta)} \int_0^t
  (t-\eta)^{\beta - 1} f(\eta , y(\eta))  d\eta.
\end{equation}

\subsection{Fractional linear multistep methods}

Among the several numerical methods to solve \eqref{Eq_FIVP_0} and thus \eqref{Eq_VIE}, we list the numerical methods that fall under the category of FLMM.

{\bf The Gr\"unwald-Letnikov method:}
The fundamental and widely investigated numerical approximation scheme for the FIVP \eqref{Eq_FIVP_0} is the Gr\"unwald-Letnikov method (also called
{\it fractional backward Euler method})
 obtained by replacing the fractional derivative operator in (\ref{Eq_FIVP2}) by its GA
operator $\delta^\beta_h $ in
(\ref{Eq_GA}) with (\ref{Eq_GAOrder}) \cite{podlubny1998fractional}.
\begin{equation}\label{Eq_FEuler}
  \frac{1}{h^\beta} \sum_{k=0}^\infty g_k^{(\beta)} y(t-k h)
  = f(t,y)  + O(h).
\end{equation}
By choosing the discretization step $h $ appropriately to align the discrete points $ t- kh$  with the end points of the problem domain $[0,T] $ and assuming zero extension for the unknown function $y(t) $ for $t < 0$, the infinite sum in \eqref{Eq_FEuler} is reduced to a finite sum.
Dropping the first order error term, choosing $h = T/N, N\in \mathbb{N} $ and denoting
\begin{equation}\label{Eq_Notations}
 t_n = nh, \quad  y_n  \approx y(t_n) \quad \text{ and } \quad
f_n = f(t_n, y_n),
\end{equation}
equation \eqref{Eq_FEuler} gives the GL scheme
\begin{equation}\label{Eq_FEuler1}
   \sum_{k=0}^n g_k^{(\beta)} y_{n-k}
  = h^\beta f_n,  \quad  n = 1,2, ...,N.
\end{equation}

The FIVP can also be approximated via its VIE form \eqref{Eq_VIE} by simply replacing the
integral by its approximation \eqref{Eq:Conv_Quad}. Some approximations in this line are the product integration methods \cite{young1954application, cameron1984product, lubich1986stability}.

Lubich  \cite{lubich1986discretized} presented and studied numerical approximation methods  for the VIE \eqref{Eq_VIE} in the form
\begin{equation}\label{Eq_FLMM_Int}
  y_n = h^\beta \sum_{k=0}^{n} \omega_k f_{n-k}, \quad n = 1,2,...
\end{equation}
with weights $\omega_k$ as the coefficients of the series expansion of the generating function
\[
\omega(\xi) = \left(
\frac{\sigma(1/\xi)}{\rho(1/\xi)}
\right)^\beta,
\]
where $(\rho,\sigma)$ is a pair of generating polynomials of a LMM of a prescribed order for classical IVP. However, Lubich observed and showed that for approximations of order more than one, the intended order $p$ is achieved only for a certain class of functions, specifically
for functions of the form $ y(t) = t^{\alpha -1 } g(t) ,  \alpha \ge p $, where $ g(t)$ is analytic.  However, for $\alpha < p,$ the order is reduced to
$O(h^\alpha)$ only. To remedy this order reduction, an additional sum
is introduced in \eqref{Eq:Conv_Quad} to have the approximation scheme
\begin{equation}\label{Eq:VIE_approx}
y_n = {h^\beta} \sum_{k=0}^s w_{n,k} f_{k} +
{h^\beta} \sum_{k=0}^n w_{k} f_{n-k}.
\end{equation}
Here, the {\it starting weights}  $w_{n,k} $  are   to
compensate the reduced order of convergence.

Another way to approximate the FIVP \eqref{Eq_FIVP_0} is to replace
the fractional derivative by its approximation in the form
\eqref{Eq_GA} with general weights $w_k$ as
\begin{equation}\label{Eq:GA_type}
  \Omega_{h}^{\beta}  y(t) := \frac{1}{h^\beta} \sum_{k=0}^\infty w_k y(t-k h),
\end{equation}
where the weights $w_k$ are chosen for a desired order of approximation.
Thus, Grunwald type approximation schemes for the FIVP have the form
expressed in conformance with the classical LMM form as
\begin{equation}\label{Eq_FLMM}
  \sum_{k=0}^n w_k y_{n-k}
   = h^\beta f_n, \quad n = 1,2,... \;.
\end{equation}
{\bf Remark 1:}
Note that, analogous to the case of FLMM for VIE, the FLMM for FIVP also displays the order reduction for the class of functions mentioned for the FLMM for VIE. Therefore, an adjusting sum with some starting weights is added to remedy this situation. We also point out, however, that this additional sum does not affect the convergence and stability of the underlying FLMM. Besides, including this sum in the computation of solution, though it rectifies the order, brings additional difficulties in the implementation such as (i) the number of starting weights vary depending on the fractional order $\beta$,  (ii) computing the starting weights at every iterations, (iii) the system to solve for the starting weights is highly ill conditions, etc.

It can be shown that the approximation schemes \eqref{Eq_FLMM_Int}  and \eqref{Eq_FLMM} are equivalent (see \cite{galeone2008fractional} and the references therein) and
the generating functions $\omega(\xi)$ and $\delta(\xi)$ of the weight coefficients $\omega_k$ and $w_k$ in \eqref{Eq_FLMM_Int}  and \eqref{Eq_FLMM} respectively can be shown to have the relation  $\delta(\xi) = (\omega(\xi))^{-1} $.
Thus, the weights $w_k$ in \eqref{Eq_FLMM_Int} can be chosen as the coefficients of the
series expansion of the generating function
\begin{equation}\label{Eq_Gen_FLMM}
\delta(\xi) = \left(
\frac{\rho(1/\xi)}{\sigma(1/\xi)}
\right)^\beta .
\end{equation}

Lubich \cite{lubich1986discretized} also presented some subclasses of FLMMs for VIE with generating functions of general form
$(r_1(\xi)^\beta r_2(\xi)$, where $ r_1(\xi) , r_2 (\xi) $ are
rational polynomials. Analogously, the FLMMs for FIVP can also be
considered with the generating functions of the form
\begin{equation}\label{Eq:Gen_FLMM_gen}
    \delta(\xi) = \left(\frac{a(\xi)}{b(\xi)}\right)^\beta
    \frac{p(\xi)}{q(\xi)} ,
\end{equation}
where $a, b, p $ and $q $ are polynomials.

\subsection{Stability regions for the FLMM}

The following definitions are fundamental for the analysis of stability of a FLMM.

\begin{definition}\rm [Stability]
Let $y_n$ be a solution of a recurrence relation with
initial data vector ${\bf y}_0$.
\begin{enumerate}
\item
$y_n$ is {\it stable} if for any perturbation $\delta {\bf y}_0 $ in ${\bf y}_0$, the resulting changes $\delta y_n$ in
$y_n$ are uniformly bounded for all $n \in \mathbb{N}$.
\item  The solution is {\it asymptotically stable} if, moreover,  $\delta y_n \rightarrow 0$ as $n \rightarrow \infty$.
\end{enumerate}
\end{definition}

The stability region for FLMM is given by
$$ S_\beta = \{ \lambda h^\beta \in \mathbb{C}\setminus \{0\} : y_n \rightarrow 0 \text{ as } n \rightarrow \infty  \}, $$ where $\lambda $ is a complex parameter of the stability test problem  \ $ {}^C D^\beta y(t) = \lambda y(t), y(0) = y_0 $.

The generating function for an FLMM directly gives
the stability region for the method.
\begin{thm} \cite{lubich1986discretized}
The stability region of an FLMM with generating function $\delta(\xi) $ is given by
\begin{equation}\label{Eq_Stab}
  S = \{ \delta(\xi) : |\xi| >1 \},
\end{equation}
\end{thm}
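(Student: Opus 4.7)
My plan is to apply the FLMM scheme to the linear test problem $^C D^\beta y(t) = \lambda y(t)$ with $y(0) = y_0$, and then convert the asymptotic decay condition on $y_n$ into a location-of-singularities statement via the generating-function formalism already used throughout Lubich's framework.

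Substituting $f_n = \lambda y_n$ into the FLMM $\sum_{k=0}^n w_k y_{n-k} = h^\beta f_n$ gives the convolution recurrence
\[
\sum_{k=0}^n w_k y_{n-k} = h^\beta \lambda\, y_n, \qquad n \ge 1,
\]
with $y_0$ prescribed. Forming the generating function $Y(\xi) = \sum_{n \ge 0} y_n \xi^n$ and recalling that $\delta(\xi) = \sum_{k \ge 0} w_k \xi^k$, the discrete convolution on the left becomes the pointwise product $\delta(\xi) Y(\xi)$. Absorbing the $n = 0$ mismatch into a constant then yields the closed form
\[
Y(\xi) = \frac{C_0}{\delta(\xi) - \lambda h^\beta}, \qquad C_0 := (w_0 - \lambda h^\beta)\, y_0.
\]

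Standard analysis of Taylor coefficients shows that $y_n \to 0$ if and only if every singularity of $Y(\xi)$ lies strictly outside the closed unit disc $\overline{D} = \{|\xi| \le 1\}$. Since $\delta$ is itself analytic on a neighbourhood of $\overline{D}$ (possible branch points on $\partial D$ arising from the fractional exponent $\beta$ can be dealt with by a standard limiting argument), the singularities of $Y$ in $\overline{D}$ are exactly the roots of $\delta(\xi) = \lambda h^\beta$. Hence $\lambda h^\beta \in S$ if and only if this equation has no root in $\overline{D}$, equivalently $\lambda h^\beta \notin \delta(\overline{D})$.

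The remaining step, and in my view the main obstacle, is the topological identification $\mathbb{C} \setminus \delta(\overline{D}) = \{\delta(\xi) : |\xi| > 1\}$. This cannot be taken for granted because the non-integer exponent makes $\delta$ multivalued, with a branch point typically at $\xi = 1$. I would restrict to the principal branch of $\delta$, use the open mapping theorem to show that $\delta$ sends both $\{|\xi|<1\}$ and $\{|\xi|>1\}$ to open sets whose common boundary is the curve $\delta(\partial D)$, and then invoke a preimage-counting argument (via the argument principle, applied to $\delta(\xi) - \mu$ on $\partial D$) to show that every point of $\mathbb{C} \setminus \delta(\partial D)$ has its $\delta$-preimages either all inside $\overline{D}$ or all outside. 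Combined with the preceding step this yields $S = \{\delta(\xi) : |\xi| > 1\}$.
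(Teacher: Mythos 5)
The paper offers no proof of this theorem---it is quoted from Lubich's work with a citation---so your argument has to stand on its own. Your set-up is the correct one and matches how the result is actually established: passing to the test equation and deriving $Y(\xi)=(w_0-\lambda h^\beta)y_0/(\delta(\xi)-\lambda h^\beta)$ is exactly the resolvent formulation underlying Lubich's proof. The first genuine gap comes immediately after. The asserted equivalence ``$y_n\to 0$ if and only if every singularity of $Y$ lies strictly outside the closed unit disc'' is false in general (a singularity on $|\xi|=1$ is compatible with decaying coefficients), and here it is fatally so: every admissible $\delta$ contains a factor such as $(1-\xi)^\beta$ with $\beta$ non-integer, so $Y$ has a branch point at $\xi=1\in\partial D$ for \emph{every} value of $\lambda h^\beta$. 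Taken literally, your criterion would make the stability region empty. The actual content of the theorem is that when $\delta(\xi)-\lambda h^\beta$ has no zero in $\overline{D}$, the branch point at $\xi=1$ contributes only an algebraically decaying tail (of order $n^{-\beta-1}$, extracted by subtracting the singular part or by a transfer-theorem/Wiener-algebra argument), so $y_n\to 0$ nonetheless, whereas a zero of $\delta(\xi)-\lambda h^\beta$ in $\overline{D}$ produces a geometrically growing or non-decaying mode. That quantitative coefficient asymptotics near the branch point is precisely what your ``standard limiting argument'' would have to supply, and it is the heart of the proof, not a routine remark.

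The second gap is the final identification. Your resolvent computation, once repaired, yields $S=\mathbb{C}\setminus\{\delta(\xi):|\xi|\le 1\}$, which is in fact how Lubich states the result. Passing from this to $\{\delta(\xi):|\xi|>1\}$ cannot be done by the principle that every $\mu\notin\delta(\partial D)$ has its preimages ``all inside or all outside'': that is false for general analytic maps (already $\xi\mapsto\xi^2-3\xi$ sends both $0$ and $3$ to $0$, so $0$ lies in the image of the closed disc and of its exterior simultaneously), and the argument principle only gives that the number of preimages in $D$ is locally constant on components of $\mathbb{C}\setminus\delta(\partial D)$, not that it is either zero or maximal. The two descriptions of $S$ coincide only under additional structural hypotheses on $\delta$ (for instance injectivity on a neighbourhood of $\overline{D}$ together with its behaviour at infinity), which must be verified for the specific generating functions in play rather than deduced from the open mapping theorem. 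So the skeleton of your argument is right, but both of its load-bearing steps are currently unsupported.
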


We list the subfamilies of the FLMMs found in the literature.

\begin{enumerate}
\item {\bf Fractional trapezoidal method:}
The fractional trapezoidal method of order 2 (FT2) obtained from the trapezoidal rule for the ODE has the generating function\cite{lubich1986discretized}
\[
\delta_{FT2}(\xi) = \left(2\frac{1-\xi}{1+\xi}\right)^\beta.
\]
It is the only method known so far in the form
 $\delta(\xi) = \left(\frac{a(\xi)}{b(\xi)}\right)^\beta $
with $ b(\xi) \ne 1, p(\xi) = q(\xi) =1 $ in \eqref{Eq:Gen_FLMM_gen}.

\item {\bf Fractional backward difference formula:}
The fractional backward difference formula (FBDF)  obtained from the BDF for classical IVP has the generating functions of the form
$\delta(\xi) = (a(\xi))^\beta $.

For orders $ 1 \le m\le 6$, a set of 6 FDBF$m$ methods have been obtained with  polynomials corresponding to the generating polynomials of the BDF of order $m$  given by $a(\xi) = \sum_{k=1}^m \frac{1}{k}(1-\xi)^k $.
\item {\bf Fractional Adams methods:}
The fractional Adams methods have the generating functions of the form $\delta(\xi) = \frac{(a(\xi)^\beta}{q(\xi)}  $, where
the polynomial $a(\xi)$ is one of the polynomials in FBDF methods and $q(\xi)$ is determined to have a specified order of consistency for the method. Often, $a(\xi) = 1-\xi $ \cite{galeone2006multistep, galeone2008fractional,
galeone2009explicit,garrappa2009some}.
However, other polynomials in the FBDF have also appeared in the literature \cite{bonab2020higher,heris2018fractional}.

When $q_0 = 0$, the method is explicit and is called  fractional Adams-Bashforth methods (FABs) \cite{galeone2009explicit,garrappa2009some}. $q_0 \ne 0 $ gives implicit methods and are called fractional Adams-Moulton methods (FAMs).
\item {\bf Rational approximation:} In \cite{aceto2015construction}, a classical LMM type of approximation is proposed to obtain a class of FLMMs by rational approximations of the FBDF generating functions in the form $\delta(\xi) = \frac{p(\xi)}{q(\xi)}$.

\end{enumerate}

The order of consistency of a FLMM can also be determined from its
generating function.
\begin{thm}\cite{NasirNafa,NasirNafaANZIAM, gunarathna2019explicit}.
The order of an FLMM
 with generating function $\delta(\xi) $  is $p$ if and only if
\begin{equation}\label{Eq_Order_Gen}
  \frac{1}{x^\beta}\delta(e^{-x}) = 1 + O(x^p).
\end{equation}
Moreover, the approximation corresponding to $\delta(\xi)$ satisfies, with $D_t^\beta $ denoting the RL fractional derivative,
\[
\delta_h^\beta y(t) = D_t^\beta y(t) + h^p a_p(\beta)
 D_t^{\beta+p}  y(t) + h^{p+1} a_{p+1}(\beta) D_t^{\beta+p+1}  y(t) + ...,
\]
where $y(t)$ is assumed to be sufficiently smooth.
\end{thm}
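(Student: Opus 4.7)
The natural route is a symbol (Fourier/Laplace) calculation, which is the standard tool for analyzing convolution quadratures of this type; the cited references all proceed along these lines. The plan is to evaluate the discrete operator $\delta_h^\beta$ and the continuous RL operator $D_t^\beta$ on a one-parameter family of exponentials, compare their symbols, and then lift the result to sufficiently smooth $y(t)$ by superposition.

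First I would fix a complex parameter $s$ (with $\mathrm{Re}\,s \ge 0$ chosen so that all tail sums converge absolutely, appealing implicitly to the zero extension of $y$ on $t<0$ that is already standing for $\delta_h^\beta$), and take the test function $y(t) = e^{st}$. A direct substitution into the definition \eqref{Eq:GA_type} gives
\[
\delta_h^\beta e^{st} \;=\; \frac{1}{h^\beta}\sum_{k=0}^\infty w_k\, e^{s(t-kh)} \;=\; \frac{\delta(e^{-sh})}{h^\beta}\, e^{st},
\]
while the RL derivative (on the causal extension) satisfies $D_t^\beta e^{st} = s^\beta e^{st}$. Subtracting and setting $x := sh$, the pointwise error on this test function equals
\[
\delta_h^\beta e^{st} - D_t^\beta e^{st} \;=\; s^\beta e^{st}\left(\frac{\delta(e^{-x})}{x^\beta} - 1\right).
\]
Hence the order of $\delta_h^\beta$ on exponentials is exactly the order of the zero of $\frac{1}{x^\beta}\delta(e^{-x}) - 1$ at $x=0$, which gives the ``if and only if'' characterization \eqref{Eq_Order_Gen}.

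Next I would promote this to the asymptotic expansion. Writing the analytic expansion
\[
\frac{\delta(e^{-x})}{x^\beta} \;=\; 1 + a_p(\beta)\,x^p + a_{p+1}(\beta)\,x^{p+1} + \cdots,
\]
and multiplying by $s^\beta e^{st}$, the right side of the error identity above becomes
\[
\sum_{j\ge p} h^j\, a_j(\beta)\, s^{\beta+j}\, e^{st} \;=\; \sum_{j\ge p} h^j\, a_j(\beta)\, D_t^{\beta+j} e^{st},
\]
because $s^{\beta+j}e^{st} = D_t^{\beta+j} e^{st}$. This is the claimed expansion on exponentials. For a sufficiently smooth (and suitably decaying) $y$, I would then invoke the Laplace inversion
\[
y(t) \;=\; \frac{1}{2\pi i}\int_{\Gamma} e^{st}\tilde y(s)\,ds
\]
and interchange the integral with the linear operators $\delta_h^\beta$, $D_t^\beta$, $D_t^{\beta+j}$ to transfer the identity from each exponential mode to $y$ itself, obtaining
\[
\delta_h^\beta y(t) \;=\; D_t^\beta y(t) + \sum_{j\ge p} h^j a_j(\beta)\, D_t^{\beta+j} y(t),
\]
with the coefficients $a_j(\beta)$ already identified above.

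The main technical obstacle is justifying those interchanges: convergence of $\sum_k w_k y(t-kh)$ together with the termwise differentiation under the Laplace integral requires that $y$ be zero-extended for $t<0$ and that $\tilde y$ decay fast enough along $\Gamma$ so the tails are uniformly controlled; equivalently, one needs $y$ to be smooth enough that $D_t^{\beta+j}y(t)$ is defined up to the order of truncation used. This is precisely the ``sufficiently smooth'' hypothesis in the statement, and is the standard assumption used in the cited works of Lubich and in \cite{gunarathna2019explicit}, so I would refer to those rather than reprove the analytic estimates in detail.
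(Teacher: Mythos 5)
The paper does not prove this theorem at all: it is quoted from \cite{NasirNafa,NasirNafaANZIAM,gunarathna2019explicit}, so there is no in-paper argument to compare against. Your symbol-based route is exactly the one used in those references, and the overall structure (compute the symbol of $\delta_h^\beta$, divide by the symbol of $D_t^\beta$, expand $\delta(e^{-x})/x^\beta$ at $x=0$, and read off the coefficients $a_j(\beta)$) is correct.

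One step is stated too loosely, though. You assert that ``the RL derivative (on the causal extension) satisfies $D_t^\beta e^{st} = s^\beta e^{st}$.'' That identity holds for the Liouville derivative with lower terminal $-\infty$; for the RL derivative with a finite lower terminal (or for $e^{st}$ zero-extended to $t<0$, which is discontinuous at $0$) there are additional lower-terminal terms, so $e^{st}$ is not an exact eigenfunction and the pointwise error identity you write is not literally valid mode by mode. The cited proofs avoid this by working with the Fourier transform on all of $\mathbb{R}$ for $y$ sufficiently smooth with enough derivatives in $L^1(\mathbb{R})$: then $\widehat{\delta_h^\beta y}(\omega) = h^{-\beta}\delta(e^{-i\omega h})\hat y(\omega)$ and $\widehat{D^\beta y}(\omega) = (i\omega)^\beta \hat y(\omega)$ exactly, the quotient $\delta(e^{-x})/x^\beta$ is expanded at $x = i\omega h$, and the remainder is controlled by a uniform bound on the symbol along the imaginary axis before inverting; for $y$ supported in $[0,\infty)$ this Liouville derivative coincides with the RL one, which is how the statement as written is recovered. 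Your Laplace-contour version can be repaired along the same lines, but as written the eigenfunction claim is the genuine gap; the rest (the ``if and only if'' from the order of the zero at $x=0$, and the identification of $h^j a_j(\beta) D_t^{\beta+j}y$ from $x^j = (sh)^j$, or $(i\omega h)^j$) is fine.
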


\section{A new fractional linear multistep method}\label{Sec_New_FLMM}

We present the main result of constructing a new FLMM
of order 2.

The fractional derivative of the FIVP   \eqref{Eq_FIVP_0} is replaced by the approximation (\ref{Eq_Super}) with super convergence of order 2. This gives at $t = t_n$,
\begin{equation}\label{Eq_super1}
  \delta_{h, \beta/2}^{\beta}  y(t_n)
  = \frac{1}{h^\beta} \sum_{k=0}^{\infty} g_k^{(\beta)} y(t_n - (k-\beta/2)h)  = f(t_n, y(t_n)) + O(h^2).
\end{equation}

Since $k-\beta/2 $ is not integer for $0< \beta \le 1$, the point $t_n-(k-\beta/2)h =: t_{n-k+\beta/2} $ is not aligned with the discrete points of the computational domain $\{t_m , m = 0, 1,...,N\}$. Replace it with an order 2 approximation with points $t_{n-k}$ and $t_{n-k-1}$ in the computational domain given by
\begin{equation}\label{Eq_Align}
    y\left(t_{n-k+\beta/2}\right) = \left(1+ \beta/2 \right) y(t_n - kh) - (\beta/2) y(t_n - (k-1)h) +O(h^2).
\end{equation}

With the notations in \eqref{Eq_Notations},
we obtain the new implicit FLMM approximation scheme
\begin{equation}\label{Eq_NewFLMM2}
  \sum_{k=0}^{\infty} g_k^{(\beta)}
  \left[ \left(1+ \frac{\beta}{2}\right) y_{n-k} - \frac{\beta}{2} y_{n-k-1} \right]   = h^\beta f_n,  \quad  n = 1,2, \cdots.
\end{equation}

The coefficients in the new FLMM \eqref{Eq_NewFLMM2} are
linear expressions of the Gr\"unwald weights $g_k^{(\beta)}$ and thus does not involve any extra computations.

For the order of the method, we have the following:
\begin{thm}
  The new FLMM in \eqref{Eq_NewFLMM2} is consistent with   order 2.
\end{thm}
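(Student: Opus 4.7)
My plan is to verify order 2 by invoking the characterization in the theorem just quoted from \cite{NasirNafa,NasirNafaANZIAM, gunarathna2019explicit}: an FLMM with generating function $\delta(\xi)$ has consistency order $p$ iff $x^{-\beta}\delta(e^{-x}) = 1 + O(x^p)$. So I need two steps: (i) compute $\delta(\xi)$ for the new scheme, and (ii) expand $\delta(e^{-x})$ through order $x^{\beta+1}$.

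For step (i), I would rewrite \eqref{Eq_NewFLMM2} in standard convolution form $\sum_k w_k y_{n-k} = h^\beta f_n$ by reindexing the $y_{n-k-1}$ sum (shifting $k \mapsto k-1$ contributes a factor $\xi$ in the generating function). Using the Gr\"unwald generating function \eqref{Eq_GGen}, this gives
\[
\delta(\xi) = \Bigl(1 + \tfrac{\beta}{2}\Bigr)(1-\xi)^\beta - \tfrac{\beta}{2}\,\xi\,(1-\xi)^\beta = \Bigl[1 + \tfrac{\beta}{2}(1-\xi)\Bigr](1-\xi)^\beta,
\]
which is of the general form \eqref{Eq:Gen_FLMM_gen} with $a(\xi)=1-\xi$, $b(\xi)=1$, $p(\xi)=1+\tfrac{\beta}{2}(1-\xi)$, $q(\xi)=1$.

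For step (ii), I would set $u := 1 - e^{-x} = x - x^2/2 + O(x^3)$ and Taylor expand: $u^\beta = x^\beta(1 - x/2 + O(x^2))^\beta = x^\beta[1 - \beta x/2 + O(x^2)]$ by the binomial series, while $1 + (\beta/2)u = 1 + \beta x/2 + O(x^2)$. Multiplying these two factors, the linear terms in $x$ cancel exactly, and dividing by $x^\beta$ gives $x^{-\beta}\delta(e^{-x}) = 1 + O(x^2)$. The characterization theorem then yields consistency order 2.

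The only real obstacle is bookkeeping: the reindexing when forming $\delta(\xi)$ and keeping the expansions to the correct order so that the $O(x)$ contributions visibly cancel. An alternative, more direct route would be to combine the super-convergence error \eqref{Eq_Super} with the interpolation error \eqref{Eq_Align} and track $h^{-\beta}\sum_k g_k^{(\beta)}\, O(h^2)$, but controlling that weighted sum is delicate since the Gr\"unwald weights are not absolutely summable for $\beta \in (0,1]$; the generating-function route sidesteps this entirely and is the cleaner path.
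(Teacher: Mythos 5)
Your proof is correct, but it takes a different route from the paper's proof of this particular theorem. The paper proves consistency of order 2 directly from the super-convergence property \eqref{Eq_Super}, the resulting scheme \eqref{Eq_super1} and the nodal-alignment interpolation \eqref{Eq_Align} (the proof is literally ``immediately follows from'' those three displays); your generating-function argument is instead essentially the content of the paper's \emph{next} theorem, where $\delta(\xi)=(1-\xi)^\beta\bigl[1+\tfrac{\beta}{2}(1-\xi)\bigr]$ is derived by the same reindexing you describe and the expansion $x^{-\beta}\delta(e^{-x})=1-\tfrac{\beta(3\beta+5)}{24}x^2+O(x^3)$ is recorded. Your expansion is right: with $u=1-e^{-x}=x-\tfrac{x^2}{2}+O(x^3)$ the linear terms of $u^\beta/x^\beta$ and $1+\tfrac{\beta}{2}u$ cancel, giving $1+O(x^2)$. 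What your route buys is rigor: the paper's ``direct'' argument silently sums the per-term $O(h^2)$ errors of \eqref{Eq_Align} against $h^{-\beta}g_k^{(\beta)}$, and a crude bound there yields only $O(h^{2-\beta})$, so the generating-function characterization \eqref{Eq_Order_Gen} really is the cleaner justification. One small correction to your closing remark: the Gr\"unwald weights \emph{are} absolutely summable for $\beta\in(0,1]$ (indeed $|g_k^{(\beta)}|\sim Ck^{-1-\beta}$ and $\sum_{k\ge 1}|g_k^{(\beta)}|=g_0^{(\beta)}=1$); the genuine delicacy in the direct route is the $h^{-\beta}$ prefactor just mentioned, not divergence of the weight series.
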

\begin{proof}
Immediately follows from \eqref{Eq_Super}, \eqref{Eq_super1} and \eqref{Eq_Align}.
\end{proof}

\begin{thm}
The generating function of the new implicit FLMM is
given by
\begin{equation}\label{Eq_ChFLMM}
  \delta(\xi) = (1-\xi)^\beta p(\xi),
\end{equation}
where
$
p(\xi) = \left( 1+ \frac{\beta}{2} \right)  -
\frac{\beta}{2} \xi.
$
Moreover, the generating function satisfies
\[
\frac{1}{x^\beta} \delta(e^{-x}) = 1 + O(x^2)
\]
confirming order 2 consistency.
\end{thm}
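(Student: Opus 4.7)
The plan is in two clean steps. First, I would derive the generating function by identifying the coefficients $w_k$ in the canonical FLMM form $\sum_{k\ge 0} w_k y_{n-k} = h^\beta f_n$. Starting from \eqref{Eq_NewFLMM2}, I would split it as
\[
(1+\tfrac{\beta}{2})\sum_{k=0}^\infty g_k^{(\beta)} y_{n-k} - \tfrac{\beta}{2}\sum_{k=0}^\infty g_k^{(\beta)} y_{n-k-1},
\]
and reindex the second sum by $j = k+1$ so that both sums are written in terms of $y_{n-k}$. This yields $w_0 = 1+\beta/2$ and $w_k = (1+\beta/2)\,g_k^{(\beta)} - (\beta/2)\,g_{k-1}^{(\beta)}$ for $k\ge 1$. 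Passing to generating series and invoking \eqref{Eq_GGen}, namely $\sum_{k\ge 0} g_k^{(\beta)} \xi^k = (1-\xi)^\beta$, immediately gives
\[
\delta(\xi) = (1+\tfrac{\beta}{2})(1-\xi)^\beta - \tfrac{\beta}{2}\,\xi\,(1-\xi)^\beta = (1-\xi)^\beta\,p(\xi).
\]

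For the second claim, I would set $\xi = e^{-x}$ and perform a Taylor expansion. Since $1 - e^{-x} = x\bigl(1 - x/2 + x^2/6 + O(x^3)\bigr)$, the generalized binomial series yields
\[
\frac{(1-e^{-x})^\beta}{x^\beta} = 1 - \tfrac{\beta}{2}\,x + c_2(\beta)\,x^2 + O(x^3),
\]
for an explicit constant $c_2(\beta)$. In parallel, $p(e^{-x}) = 1 + \tfrac{\beta}{2}\,x - \tfrac{\beta}{4}\,x^2 + O(x^3)$. The crucial observation is that the linear terms $-\tfrac{\beta}{2} x$ and $+\tfrac{\beta}{2} x$ cancel when the two factors are multiplied, so
\[
\frac{1}{x^\beta}\,\delta(e^{-x}) = 1 + O(x^2).
\]
By Theorem 4 (the characterization of order through the generating function), this confirms order 2.

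Alternatively — and this is worth noting in the writeup — the order-2 conclusion can be obtained without the explicit expansion by combining the already proved Theorem 3 with Theorem 4: consistency of order 2 of the scheme \eqref{Eq_NewFLMM2} forces its generating function to satisfy $x^{-\beta}\delta(e^{-x}) = 1 + O(x^2)$. I would nonetheless present the direct expansion, since it also gives the leading error constant and serves as an independent verification.

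The only mildly delicate point is handling the generalized binomial series $(1 + u)^\beta$ with non-integer $\beta$ in the expansion of $(1 - e^{-x})^\beta$; this is standard but one must be careful to retain all contributions to order $x^2$ from both $\beta(-x/2 + x^2/6)$ and $\binom{\beta}{2}(-x/2)^2$. Once the bookkeeping is done, the cancellation of the linear term falls out cleanly from the choice $p(\xi) = (1+\beta/2) - (\beta/2)\xi$, which was designed precisely to kill that term.
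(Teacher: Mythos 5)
Your proposal is correct and follows essentially the same route as the paper: reindex the shifted sum to identify the weights $w_k = (1+\tfrac{\beta}{2})g_k^{(\beta)} - \tfrac{\beta}{2}g_{k-1}^{(\beta)}$, pass to the generating series via \eqref{Eq_GGen}, and then Taylor-expand $x^{-\beta}\delta(e^{-x})$ to confirm the linear term cancels. Carrying out your bookkeeping gives $c_2(\beta) = \tfrac{\beta(3\beta+1)}{24}$ and a net $x^2$-coefficient of $-\tfrac{\beta(3\beta+5)}{24}$, exactly the constant the paper records.
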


\begin{proof}
The sum on the left side of \eqref{Eq_NewFLMM2} is
manipulated with $p_0 = 1 + \beta/2, \quad p_1 = -\beta/2 $ as follows:
\begin{align}
  \sum_{k=0}^{\infty} g_k^{(\beta)}
   \left( p_0 y_{n-k} + p_1 y_{n-k-1} \right)
   & =  p_0\sum_{k=0}^{\infty} g_k^{(\beta)}
    y_{n-k} +
  p_1\sum_{k=0}^{\infty} g_k^{(\beta)}
    y_{n-k-1}\nonumber\\
   & = p_0 \sum_{k=0}^{\infty} g_k^{(\beta)}
    y_{n-k} +
  p_1 \sum_{k=1}^{\infty} g_{k-1}^{(\beta)}
    y_{n-k}
   =  \sum_{k=0}^{\infty}
   \left( p_0 g_k^{(\beta)}
   + p_1  g_{k-1}^{(\beta)} \right)
   y_{n-k}, \label{Eq:FLMM_Reform}
\end{align}
where we have set $ g_{-1}^{(\beta)} = 0 $.
The weights
\begin{equation}\label{Eq:FLMM_weights}
 w_k = p_0 g_k^{(\beta)} + p_1 g_{k-1}^{(\beta)}, \quad k = 0,1,...\;.
\end{equation}
are the coefficients of the generating function
   \[
   \delta(\xi) = p_0 (1-\xi)^\beta + p_1 \xi (1-\xi)^\beta = (1-\xi)^\beta (   p_0 + p_1 \xi  ).
   \]
Moreover, we have
\[
\frac{1}{x^\beta} \delta(e^{-x}) = 1 - \frac{\beta(3\beta+5)}{24}x^2 + O(x^3).
\]
which completes the proof.
\end{proof}

{\bf Remark 2:} When $\alpha = 1$,
the new FLMM coincides with the BDF2 method $(\rho, \sigma)$ of order 2 for the classical IVP with generating polynomials
$ \rho(\xi) = \frac{3}{2} - 2 \xi +\frac{1}{2} \xi^2 $ and $\sigma(\xi) = 1$.

{\bf Remark 3:} The notion of super convergence and nodal alignment have been applied for space fractional diffusion equations in \cite{nasir2013second} and \cite{zhao2015series}. To the knowledge of the authors, super convergence of Gr\"unwald approximation for time fractional differential equations has not appeared before in the literature.

\subsection{Implementation}
Here, we give two algorithms to compute the approximate solutions for the FIVP for linear and non-linear cases using the new FLMM .

As the starting weights do not affect the convergence and stability, we exclude the starting sum in the algorithms (see also Remark 1). For details of implementing the starting sum, the reader is directed to \cite{garrappa2015trapezoidal}.

For brevity of notations, the convolution of two vectors ${\bf a,b} $ of size $ n+1 $ is denoted by
$ {\bf a*b} = \sum_{k=0}^{n}a_k b_{n-k} $. For a sequence ${\bf a}$, the vector slice $[a(i), a(i+1) , \ldots, a(j) ] $ is denoted by $ {\bf a}_{i,j} $.

We reformulate the new FLMM scheme \eqref{Eq_NewFLMM2} with \eqref{Eq:FLMM_Reform}
and \eqref{Eq:FLMM_weights} as
\begin{equation}\label{Eq:FLMM_Reform2}
  \sum_{k=0}^{n}w_k y_{n-k} = {\bf w}_{0,n} * {\bf y}_{0,n}
  = w_0 y_n + {\bf w}_{1,n} * {\bf y}_{0,n-1} = \lambda h^\beta f_n.
\end{equation}

In the case of linear FIVP, we have $ f(t,y) = \lambda y(t) + s(t) $ for some constant $\lambda $ and function $s(t)$.

We write the scheme \eqref{Eq:FLMM_Reform2} for this case, with $s_n = s(t_n)$,  as
\begin{align*}
w_0 y_n + {\bf w}_{1,n} * {\bf y}_{0,n-1}  = h^\beta (\lambda y_n + s_n)
\Rightarrow
y_n = \frac{1}{w_0 - \lambda h^\beta}
\left[
h^\beta s_n - {\bf w}_{1,n}*{\bf \bar y}_{0,n-1}
\right], \quad n = 1,2,...\; .
\end{align*}
Hence, the algorithm for the linear FIVP is devised as

{\bf Algorithm 1} [For linear FIVP]
\begin{enumerate}
  \item Input $ \lambda , s(t) $ , $ y_0 $ and $ t_0, h, N $.
  \item Compute sequence  ${\bf w }_{0,N} $.
  \item For $ n = 1, 2, \ldots, N $,
   Compute $ y_n = \frac{1}{w_0 - \lambda h^\beta}
\left[ h^\beta s_n - {\bf w}_{1,n}*{\bf \bar y}_{0,n-1} \right] $.
\end{enumerate}

For non-linear FIVP, the non-linear equation  \eqref{Eq:FLMM_Reform2} in $y_n$ needs to be solved for the unknown $y_n$. The Newton-Raphson method is used to numerically solve this with an initial seed $y_{n, 0}= y_{n-1} $. Thus, the following algorithm results for non-linear
FIVP.

{\bf Algorithm 2} [For non-linear FIVP]
\begin{enumerate}
  \item Input $ f(t,y), f_y(t,y) $ , $ y_0 $
  \item For $ n = 1, 2, \ldots, N $,
  \item $ c_n = {\bf w}_{1,n}*{\bf \bar y}_{0,n-1} $
  \item Set $ y_{n,0} = y_{n-1} $.
  \item For $ k = 1,2,\ldots , $
  \item Compute $ F_{k-1} =
          w_0 y_{n,k-1} -    h^\beta f(t_{n}, y_{n,k-1} ) + c_n $.
  \item Compute $ JF_{k-1} = w_0 -   h^\beta f_y(t_{n}, y_{n,k-1} )$
  \item Compute $ y_{n,k} = y_{n,k-1} - \frac{F_{k-1}}{JF_{k-1}} . $
  \item Until convergence at $k = K $.
  \item Set $ y_n = y_{n,K} $.
\end{enumerate}

\section{Numerical Tests }\label{Sec_Numer_Test}

We used the   new FLMM to compute approximate solutions of the non-linear FIVP
\begin{align*}
D^\beta y(t) &= f(t,y) , \quad 0\le t\le 1, 0<\beta\le 1,\\
y(0) &= 0.
\end{align*}
where
\[
f(t,y) = \frac{\Gamma(2\beta+5)}{\Gamma(\beta+5)}t^{\beta+4}
- \frac{240}{\Gamma(6-\beta)} t^{(5-\beta)}
+ (t^{2\beta+4}-2t^5)^2- y(t)^2.
\]

The exact solution of the problems is given by
$ y(t) = t^{2\beta+4}-2t^5 $.

The problem is solved with fractional orders $\beta = 0.4,0.6, 0.8 $ and $1.0 $. The computational domain of the problem is  $ \{ t_n = n/M,  n = 0,1,\cdots, M\} $ and
 step size $ h = 1/M$, where $M$ is the number of subintervals of the problem domain $[0,1]$.  The problem was computed for $ M = 2^j, j = 3,4,...,12 $.

 The computational order of the method is computed by the formula
 \[
 p_{j+1} = \log( E_{j+1}/E_j )/\log( h_{j+1}/h_j )
 \]
 where $E_j , h_j $ are the Maximum error and the step size for $M = 2^j$.

 Table \ref{Tbl_Orders} list the results obtain in the computations.

 \begin{table}[h]\centering
\footnotesize
 \begin{tabular}{|r||cc||cc||cc|cc|}
 \hline
 & $\beta = 0.4 $ & & $\beta = 0.6$ & & $\beta = 0.8$ & & $\beta = 1.0 $ &\\
 \hline
 $M$ &  Max. Error  & Order & Max Error & Order & Max Error & Order & Max Error & Order \\
 \hline
   8  &  1.698e-01 & -- & 9.070e-02 & -- & 7.835e-02 & -- & 6.985e-02 & -- \\
   16  &  2.779e-02 & 2.61128 & 2.169e-02 & 2.06382 &  1.978e-02 & 1.98599  &  1.769e-02 & 1.98155 \\
   32  &  6.648e-03 & 2.06349 & 5.503e-03 & 1.97912  &  5.060e-03 & 1.96667 &  4.466e-03 & 1.98563\\
   64  &  1.663e-03 & 1.99866 & 1.398e-03 & 1.97644 &  1.286e-03 & 1.97645 &  1.122e-03 & 1.99286 \\
  128  &  4.186e-04 & 1.99047 & 3.534e-04 & 1.98446 &  3.245e-04 & 1.98628 &  2.812e-04 & 1.99660\\
  256  &  1.052e-04 & 1.99271 & 8.888e-05 & 1.99117 &  8.155e-05 & 1.99260 &  7.037e-05 & 1.99836\\
  512  &  2.638e-05 & 1.99566 & 2.229e-05 & 1.99530 &  2.044e-05 & 1.99616 &  1.760e-05 & 1.99920\\
 1024  &  6.605e-06 & 1.99764 & 5.583e-06 & 1.99758 &  5.117e-06 & 1.99804 &  4.402e-06 & 1.99960\\
 2048  &  1.653e-06 & 1.99877 & 1.397e-06 & 1.99877 &  1.280e-06 & 1.99901 &  1.101e-06 & 1.99980 \\
 4096  &  4.133e-07 & 1.99938 & 3.494e-07 & 1.99938 &  3.202e-07 & 1.99950 &  2.752e-07 & 1.99990\\
 \hline
 \end{tabular}
 \caption{Computational order of the new FLMM}\label{Tbl_Orders}
 \end{table}

\section{Analysis of linear stability}\label{Sec_Anal_Stab}

For the analysis of stability of a FLMM, we have the following preparations.
The analytical solution of the test problem
\[
{}^C D_t^\beta y(t) = \lambda y(t), \quad y(0) = y_0
\]
   is given by
$y(t) = E_\beta( \lambda t^\beta)y_0 $, where $E_\beta (\cdot) $ is the
the Mittag-Leffler function
$   E_\beta(x) = \sum_{k=0}^{\infty} \frac{x^k}{\Gamma(\beta k +1)}.
$

The analytical solution $y(t)$ of the test problem is stable in the sense that it vanishes in the $\beta \pi$-angled region
\[
    \Sigma_\beta = \left\{ \xi \in \mathbb{C} :
    |\arg(\xi)| > \frac{\beta\pi}{2}  \right\}.
\]
The analytical unstable region is thus
the infinite wedge $\{ \xi \in \mathbb{C} :
    |\arg(\xi)| \le \frac{\beta\pi}{2}  \} = \mathbb{C}\setminus \Sigma_\beta $.

For the numerical stability of FLMM, we have the following criteria:

\begin{definition}
Let $ S $ be the numerical stability region of a FLMM.
For an angle $\alpha$, define the wedge
\[
 S(\alpha) = \{ \xi :  |\arg(\xi) -\pi| \le \alpha  \}
 = \{ \xi :  |\arg(\xi)| > \alpha  \}.
\]
The FLMM is said to be
\begin{enumerate}
\item
$A(\alpha)$-stable if  $ S(\alpha) \subseteq S $.
\item
$A$-stable if it is $A(\beta \pi/2) $-stable. That is, $ \Sigma_\beta \subseteq S $.
\item
unconditionally stable if the negative real line $(-\infty, 0) \subseteq S$.

 \end{enumerate}
\end{definition}

We analyse the stability of the new FLMM
through its stability region
\[
    S = \{ (1-\xi)^\beta p(\xi) : |\xi| > 1\}
    = \mathbb{C} \setminus S^c ,
\]
where $ S^c = \{ (1-\xi)^\beta p(\xi) : |\xi| \le 1 \} $ is the unstable region.

\begin{thm}\label{Th_bdd_Sym_right}
The unstable region $S^c$ is bounded and symmetric about the real axis.
Moreover, For $ 0 < \beta \le 1$, if $\Im(\xi) > 0$, then
$\Re \delta(\xi) = \Re \delta(\bar \xi) > 0 $
and  $\Im \delta(\xi) = - \Im \delta(\bar \xi ) < 0 $.
\end{thm}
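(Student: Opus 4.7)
The plan is to handle boundedness and real-axis symmetry by soft general arguments, and then isolate the two sign claims, which require careful argument-tracking across the two factors of $\delta$. Boundedness of $S^c$ follows because $\delta(\xi)=(1-\xi)^\beta p(\xi)$, taken with the principal branch of the power, is continuous on the compact set $\{|\xi|\le 1\}$, noting that $(1-\xi)^\beta\to 0$ at the only potentially problematic point $\xi=1$. For symmetry, the coefficients of $\delta$ in its expansion about $0$ are the real weights $w_k$ of \eqref{Eq:FLMM_weights}, so Schwarz reflection gives $\delta(\bar\xi)=\overline{\delta(\xi)}$ on the closed disk. This reflection identity simultaneously delivers the symmetry of $S^c$ about the real axis and the two equalities $\Re\delta(\xi)=\Re\delta(\bar\xi)$ and $\Im\delta(\xi)=-\Im\delta(\bar\xi)$.

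For the sign inequalities, I would take $\xi=x+iy$ with $x^2+y^2\le 1$ and $y>0$ and first locate both factors in the open fourth quadrant. Since $y>0$ forces $x<1$, one has $\Re(1-\xi)=1-x>0$ and $\Im(1-\xi)=-y<0$, so $\arg(1-\xi)\in(-\pi/2,0)$ and hence $\arg(1-\xi)^\beta=\beta\arg(1-\xi)\in(-\beta\pi/2,0)\subset(-\pi/2,0)$. Likewise $\Re p(\xi)=1+\beta/2-(\beta/2)x>1$ and $\Im p(\xi)=-(\beta/2)y<0$, giving $\arg p(\xi)\in(-\pi/2,0)$. Adding the two arguments gives $\arg\delta(\xi)\in(-\pi,0)$, which already yields $\Im\delta(\xi)<0$.

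The main obstacle is sharpening this to $\arg\delta(\xi)>-\pi/2$, because naively the sum of arguments could drop as low as $-\pi$ when $\beta$ is near $1$. Write $\arg\delta(\xi)=-(\beta u+v_\beta)$ with $u=\arctan(y/(1-x))$ and $v_\beta=\arctan\bigl((\beta y/2)/(1+\beta/2-\beta x/2)\bigr)$; the claim reduces to $\beta u+v_\beta<\pi/2$. The key step is a monotonicity in $\beta$: a short derivative computation shows that the ratio inside $v_\beta$ is increasing in $\beta$ (its $\beta$-derivative simplifies to $(y/2)/(1+\beta(1-x)/2)^2>0$), so $v_\beta\le v_1=\arctan(y/(3-x))$; combined with $\beta u\le u$ this reduces the task to the case $\beta=1$. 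There, $u+v_1<\pi/2$ is equivalent, via $\tan$, to $y^2<(1-x)(3-x)$, and since $y>0$ forces $x^2<1$, one has $y^2\le 1-x^2=(1-x)(1+x)<(1-x)(3-x)$, which closes the argument and delivers $\Re\delta(\xi)>0$.
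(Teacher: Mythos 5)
Your proof is correct, and it takes a genuinely different route from the paper's. The paper restricts the sign analysis to the unit circle: it sets $\xi=e^{i\theta}$, writes $1-\xi=2\sin\frac{\theta}{2}\,e^{i(\theta/2-\pi/2)}$, expands $\Re\delta$ and $\Im\delta$ into explicit trigonometric expressions $b^\beta g(\theta)$ and $b^\beta h(\theta)$, and proves $\Re\delta>0$ by showing $g'(\theta)>0$ on $(0,\pi)$ with $g(0)=\cos(\beta\pi/2)>0$ (a monotonicity in $\theta$), while $\Im\delta<0$ follows from quadrant bookkeeping. You instead track the arguments of the two factors $(1-\xi)^\beta$ and $p(\xi)$ separately for an arbitrary $\xi$ in the closed disk, observe both lie in $(-\pi/2,0)$ so that $\Im\delta<0$ is immediate, and then sharpen the sum of arguments above $-\pi/2$ by a monotonicity in $\beta$ (reducing to $\beta=1$) followed by the elementary inequality $y^2\le 1-x^2<(1-x)(3-x)$. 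Two remarks on what each buys: your argument is stronger in domain coverage --- it establishes the sign claims on the whole closed disk off the real axis, which is what is actually needed to place all of $S^c$ (not merely its boundary curve) in the closed right half-plane, whereas the paper's computation as written covers only $|\xi|=1$; on the other hand, the paper's boundary parametrization produces the functions $g(\theta)$ and $h(\theta)$ that are reused immediately afterwards in the $A$-stability proof (Theorem~\ref{Th_A_Stable}) via the tangent ratio $h/g$, so your approach, while cleaner here, would not feed directly into that next step. Your treatments of boundedness (continuity on a compact set versus the paper's explicit bound $2^\beta(1+\beta)$) and of symmetry (real Taylor coefficients, hence $\delta(\bar\xi)=\overline{\delta(\xi)}$) are equivalent to the paper's.
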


\begin{proof}
For the boundedness, we see that, for $ |\xi| \le 1$,
$
|\delta(\xi)| \le (1 + |\xi|)^\beta \left[ \left(1+\frac{\beta}{2}\right) + \frac{\beta}{2} |\xi| \right] \le 2^\beta (1+\beta) <\infty
$.

For the symmetry about the real axis, we immediately see that
$ \delta(\bar{\xi}) = \overline{\delta(\xi)}$.

Again, for $\xi = e^{i \theta} $, we have
$
 1- \xi = \left( e^{-\frac{i\theta}{2}} - e^{ \frac{i\theta}{2}}   \right)e^{ \frac{i\theta}{2}}
= 2i \sin\frac{\theta}{2} e^{\frac{i\theta}{2}}
= 2 \sin\frac{\theta}{2} e^{i( \frac{\theta}{2}-
\frac{\pi}{2}) }
  =: b e^{i\phi},
$\\
where $\phi \equiv \phi(\theta) = \frac{\theta}{2}-\frac{\pi}{2}  $ and
$  b = 2 \sin\frac{\theta}{2} = 2\cos\phi > 0 $ for $0 < \theta < \pi $.

Since
$
\delta(\xi) = (1-\xi)^\beta [1+ \frac{\beta}{2}(1-\xi)]
$,
we have for its real part,
\begin{equation}\label{Eq:Real_delta}
\Re{(\delta(\xi))} = b^\beta
[\cos \beta \phi + \beta \cos\phi \cos (\beta+1)\phi]
           =: b^\beta g(\theta).
\end{equation}
where, with some trigonometric manipulations,
\[
g(\theta) = \left(1+\frac{\beta}{2}\right)\cos\beta \phi
        + \frac{\beta}{2}\cos(\beta +2) \phi.
\]
Now,
$
g'(\theta) = -\beta\left(1+\frac{\beta}{2}\right)
[ \sin \beta \phi + \sin (\beta +2)\phi ] \phi'
 = -\beta(2+\beta)
\sin(\beta+1)\phi \cos \phi
 > 0,
$
because for $0<\theta < \pi $,
$ \phi \in (-\pi/2 , 0) $
where $\cos\phi > 0$ and $ (\beta+1)\phi \in
[-(\beta+1)\pi/2 , 0]  $ in the quadrants III and IV for $0<\beta \le 1$
where $\sin (\beta+1)\phi < 0 $.
Hence, $g(\theta)$ is increasing with $g(0) = \cos (\beta \pi/2) >0 $.
Thus, $g(\theta) > 0 $ for $ 0 < \theta < \pi $.
It then follows from the symmetry that
$\Re\delta(\xi) = \Re\delta(\bar\xi) > 0 $ .

For the imaginary part of $\delta(\xi) $,
\begin{equation}\label{Eq:Imag_delta}
\Im \delta(\xi) = b^\beta
[\sin \beta \phi + \beta \cos\phi \sin (\beta+1)\phi]
          =: b^\beta h(\theta) < 0,
\end{equation}
because, when $ 0 < \theta < \pi$, we see $\phi$ and $ \beta\phi  $ are in the quadrant IV where
$ \cos\phi >0 $ and
$ \sin \beta \phi < 0 $,
 and $ (\beta+1)\phi$ is in the quadrants III and IV where $ \sin (\beta+1)  \phi < 0 $.
This gives, along with the symmetry about real axis, that
$
  \Im(\delta(\xi(\theta)) = - \Im(\delta(\xi(-\theta)) )<  0 \quad   \text{ for }  0 < \theta < \pi
$
 and the proof is completed.
\end{proof}

Theorem \ref{Th_bdd_Sym_right}(1) tells us that the new FLMM  is $A(\frac{\pi}{2})$-stable.
In fact, we have a stronger result.

\begin{thm}\label{Th_A_Stable}
The FLMM in \eqref{Eq_NewFLMM2} is $A$-stable for $0<\beta \le1 $.
\end{thm}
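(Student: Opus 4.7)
The plan is to establish the equivalent inclusion $S^c = \delta(\bar D) \subseteq W$, where $D$ is the open unit disk and $W = \{z\in\mathbb{C} : |\arg z| \le \beta\pi/2\}\cup\{0\}$. The strategy has two parts: first, reduce to the boundary of the disk via a maximum principle; then verify the boundary estimate $|\arg\delta(e^{i\theta})|\le \beta\pi/2$ by a direct computation.

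For the reduction, I would observe that $p(\xi)=0$ forces $\xi = 1 + 2/\beta > 1$, so $\delta$ is holomorphic and non-vanishing on the simply-connected domain $D$. Consequently $\log\delta$ admits a single-valued holomorphic branch on $D$ (normalized by $\log\delta(0) = \log(1+\beta/2)$), and $\arg\delta$ is a single-valued harmonic function there. It extends continuously to $\partial D\setminus\{1\}$ (agreeing with the principal branch since $\Re\delta\ge 0$ on $\partial D$ by Theorem~\ref{Th_bdd_Sym_right}), and stays bounded near $\xi = 1$ because $\delta(\xi)\sim(1-\xi)^\beta$ there. Applying the maximum principle to $\arg\delta$ on $D\setminus\overline{B(1,\epsilon)}$ and letting $\epsilon\to 0^+$ would then yield
\[
\sup_{\xi\in\bar D\setminus\{1\}}|\arg\delta(\xi)| \;\le\; \sup_{\theta\in(-\pi,\pi]\setminus\{0\}}|\arg\delta(e^{i\theta})|.
\]

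For the boundary estimate, the conjugation symmetry $\delta(\bar\xi)=\overline{\delta(\xi)}$ from Theorem~\ref{Th_bdd_Sym_right} reduces matters to the upper semicircle $\xi=e^{i\theta}$ with $0<\theta<\pi$, where $\Re\delta>0$ and $\Im\delta<0$, so $\arg\delta\in(-\pi/2,0)$. The required inequality $\arg\delta\ge -\beta\pi/2$ rewrites as
\[
\sin(\beta\pi/2)\,\Re\delta(e^{i\theta}) + \cos(\beta\pi/2)\,\Im\delta(e^{i\theta}) \;\ge\; 0.
\]
Substituting \eqref{Eq:Real_delta}--\eqref{Eq:Imag_delta}, applying the product-to-sum identity to $\beta\cos\phi\sin((\beta+1)\phi)$, and setting $u=\theta/2\in(0,\pi/2]$, this collapses to the one-variable claim
\[
F(u) := \bigl(1+\tfrac{\beta}{2}\bigr)\sin(\beta u) - \tfrac{\beta}{2}\sin((\beta+2)u) \;\ge\; 0.
\]
Since $F(0)=0$, I would close the argument by a derivative estimate: the identity $\cos A-\cos B = 2\sin\tfrac{A+B}{2}\sin\tfrac{B-A}{2}$ gives
\[
F'(u) = 2\beta\bigl(1+\tfrac{\beta}{2}\bigr)\sin\bigl((\beta+1)u\bigr)\sin u,
\]
which is nonnegative on $[0,\pi/2]$ for $\beta\in(0,1]$ because $\sin u\ge 0$ and $(\beta+1)u\in[0,\pi]$.

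The main obstacle I anticipate is the careful justification of the maximum-principle step in the presence of the zero of $\delta$ at $\xi=1$: a consistent branch of $\log\delta$ must be selected across $D$, and the $\epsilon\to 0$ limit relies on the fact that $\arg\delta$ (not $|\log\delta|$) remains bounded near $\xi=1$, so that the contribution of $\partial B(1,\epsilon)\cap\bar D$ to the boundary maximum is at most $\beta\pi/2$ in the limit. Once this is set up, the rest of the argument is the clean trigonometric inequality $F(u)\ge 0$ above.
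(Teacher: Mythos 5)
Your proposal is correct, and its core boundary computation is a repackaged version of the paper's: both proofs reduce $A$-stability to showing $|\arg\delta(e^{i\theta})|\le\beta\pi/2$, starting from the same expressions \eqref{Eq:Real_delta}--\eqref{Eq:Imag_delta}. Where the paper proves this by showing that $h(\theta)/g(\theta)=\tan(\arg\delta(e^{i\theta}))$ is monotonically increasing on $[0,\pi]$ with minimum $-\tan(\beta\pi/2)$ at $\theta=0$, you instead verify the equivalent rotated-coordinate inequality $\sin(\beta\pi/2)\,g+\cos(\beta\pi/2)\,h\ge0$, collapsing it (correctly --- I checked the substitution $s+\beta\phi=\beta u$, $s+(\beta+2)\phi=(\beta+2)u-\pi$) to $F(u)=(1+\beta/2)\sin(\beta u)-(\beta/2)\sin((\beta+2)u)\ge0$ with $F(0)=0$ and $F'(u)=\beta(\beta+2)\sin((\beta+1)u)\sin u\ge0$; this is essentially the same trigonometric fact ($\sin((\beta+1)\phi)\cos\phi\le0$) that drives the paper's derivative computation, so the two boundary arguments are of equal difficulty and equal strength. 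The genuine added value of your write-up is the first part: the paper passes from ``the curve $\delta(e^{i\theta})$ lies in the wedge'' to ``the unstable region $S^c=\delta(\bar D)$ lies in the wedge'' with only the phrase ``from the symmetry,'' whereas you justify this disk-to-circle reduction explicitly, either via the maximum principle for the harmonic function $\arg\delta$ on the punctured domain (handling the zero of $\delta$ at $\xi=1$, where $\arg\delta$ stays bounded by $\beta\pi/2+o(1)$) or, equivalently, one could use that $\delta$ is open on $D$ so $\partial(\delta(\bar D))\subseteq\delta(\partial D)$. That step is a real gap in the paper's exposition that your argument closes; the only caveat is that your maximum-principle route needs $\Re\delta>0$ on the open disk, or else the openness/boundary argument, to pin down the branch of $\arg\delta$, but both are available here.
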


\begin{proof}
From \eqref{Eq:Real_delta} and \eqref{Eq:Imag_delta},
the tangent at $\theta \in [0,\pi]$ on the stability boundary
$\{ \delta(\xi) : |\xi| = 1 \} $ is $h(\theta)/g(\theta)$ with the derivative
\[
\frac{d}{d\theta} \frac{h(\theta)}{g(\theta)} = \frac{\beta(\beta+1)(\beta+2)\cos^2\phi}{2(g(\theta))^2} > 0.
\]
Thus, the tangent is monotonically increasing in $[0,\pi]$ with the minimum $(h/g)(0) =  -\tan(\beta\pi/2)$  at $\theta = 0$.

Therefore, from the symmetry, the unstable region is contained in the wedge $ \{ \xi : |\arg(\xi)| \le \frac{\beta\pi}{2} \}= \mathbb{C}\setminus \Sigma_\beta$
meaning that the new FLMM is $A$-stable.
\end{proof}

The $A$-stability indicates that, for $0 <\beta \le 1$,
our new FLMM is $A(\pi/2)$-stable and hence unconditionally stable.
\begin{figure}[h]
  \centering
  \includegraphics[width=13cm]{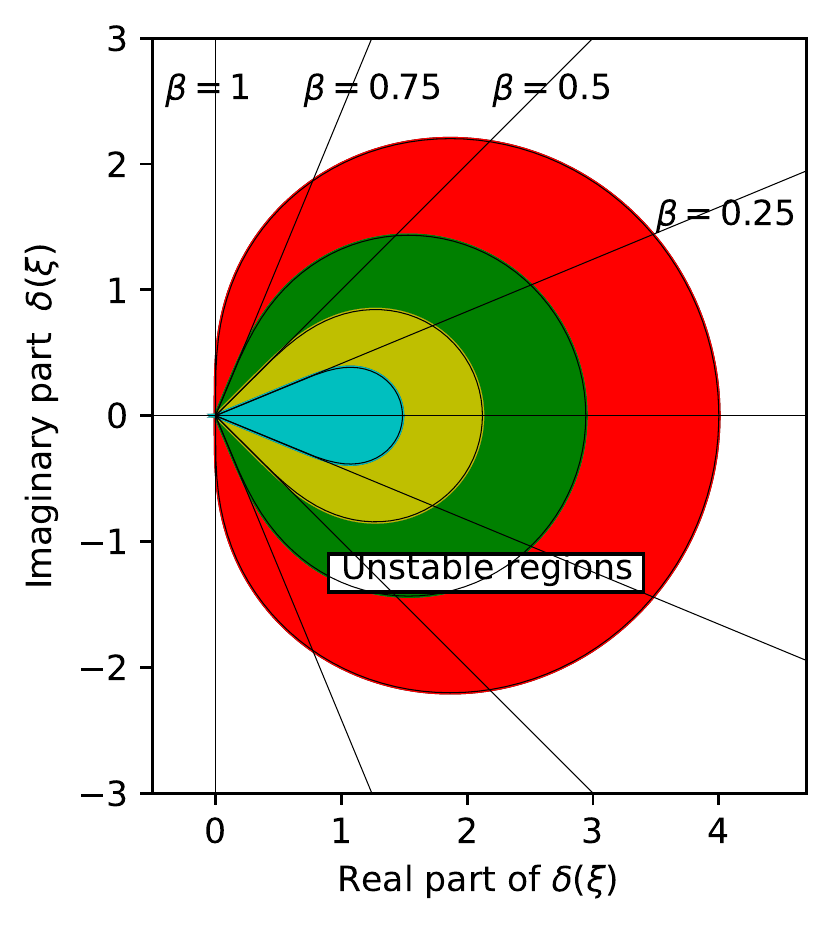}
  \caption{Unstable regions and $A$-stable tangent boundaries for the new FLMM}\label{Fig_A_Stab}
\end{figure}
In Figure \ref{Fig_A_Stab}, the unstable regions and the A-stable tangent boundaries for fractional order values $\beta = 0.25, 0.5, 0.75, 1 $ are shown.

\section{Comparison of stability regions}\label{Sec_Compare}

We compare the stability regions of previously established implicit FLMMs of order 2 with our new FLMM which we now denote by NFLMM2  for want of an abbreviation.

For this, we consider the Lubich's fractional backward difference method FBDF2 \cite{lubich1986discretized}, the fractional Adams-Moulton method FAM1 \cite{galeone2008fractional} and the fractional Trapezoidal rule (FT2) \cite{lubich1986discretized},
\cite{garrappa2015trapezoidal} given by their respective generating functions
\[
\delta_{FBDF2}(\xi) = \left(\frac{3}{2} - 2\xi + \frac{1}{2}\right)^\beta,\quad
\delta_{FAM1}(\xi) = \frac{(1-\xi)^\beta}
{ (1 - \frac{\beta}{2}) + \frac{\beta}{2} \xi  }
\quad \text{ and }\quad
\delta_{FT2}(\xi) = \left( 2\frac{1-\xi}{1+\xi} \right)^\beta.
\]
\begin{figure}[h]
  \centering
  \begin{tabular}{cc}
  \includegraphics[width=6cm]{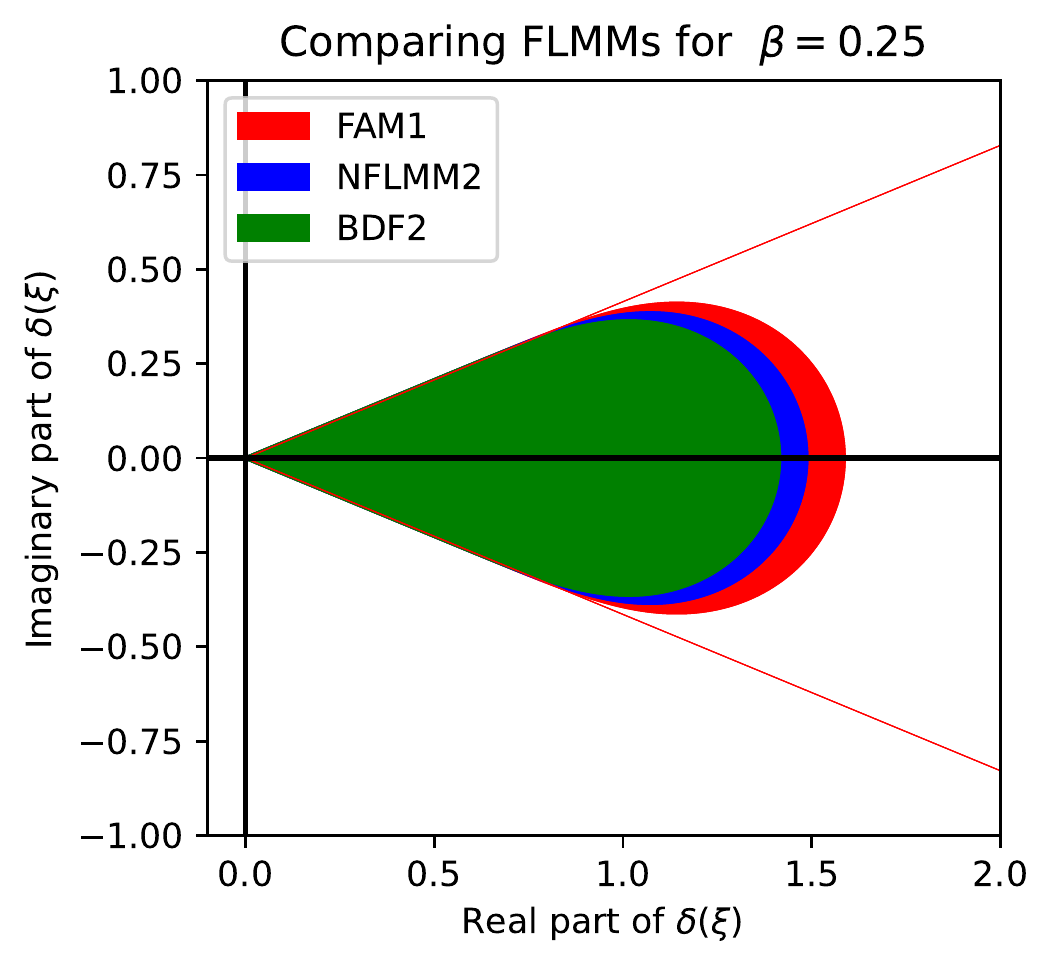}  &
  \includegraphics[width=6cm]{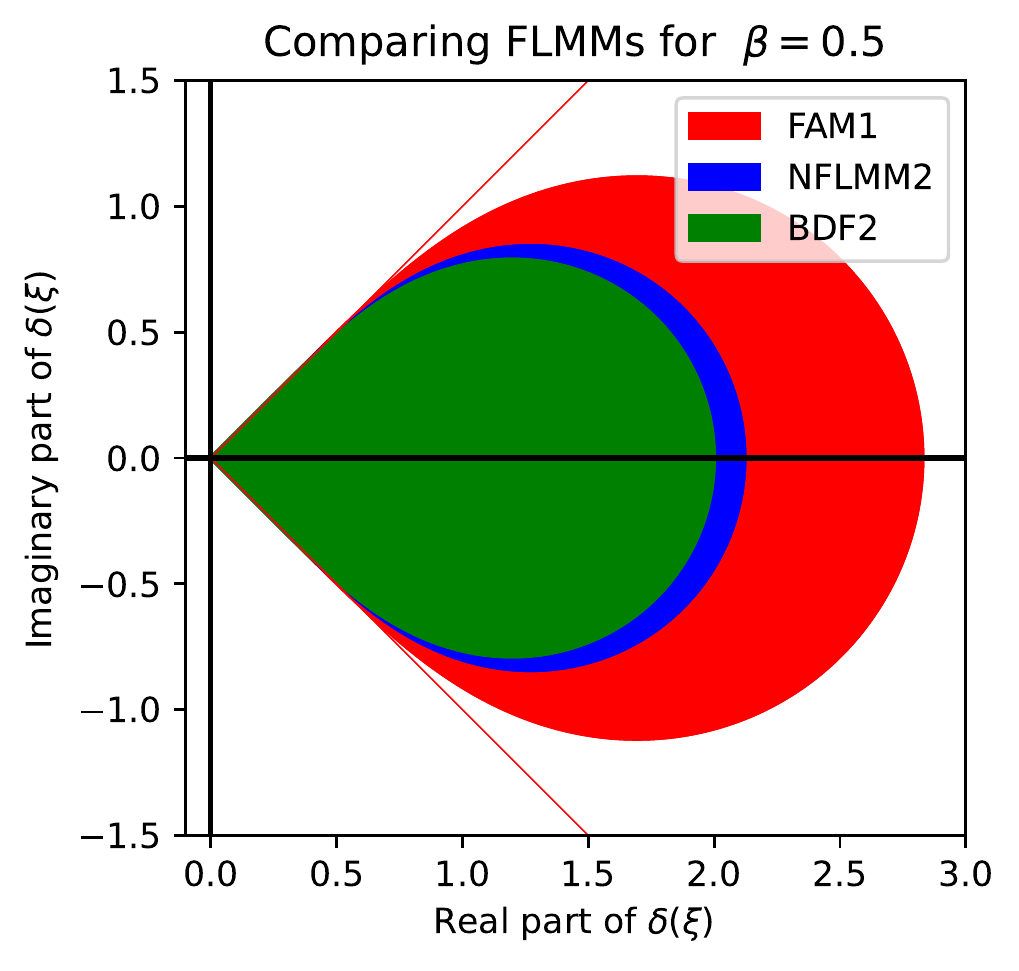} \\
  (a) $\beta = 0.25$ &(b)  $\beta = 0.50$\\
  \includegraphics[width=6cm]{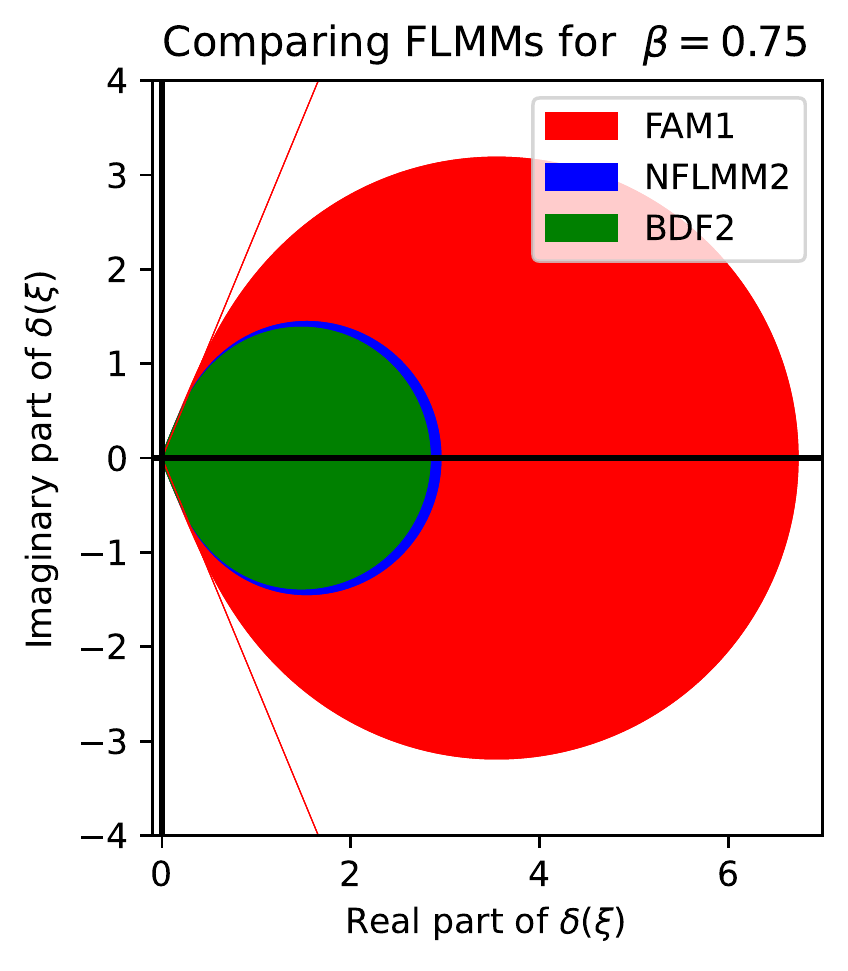} &
  \includegraphics[width=6cm, height = 6.6cm]{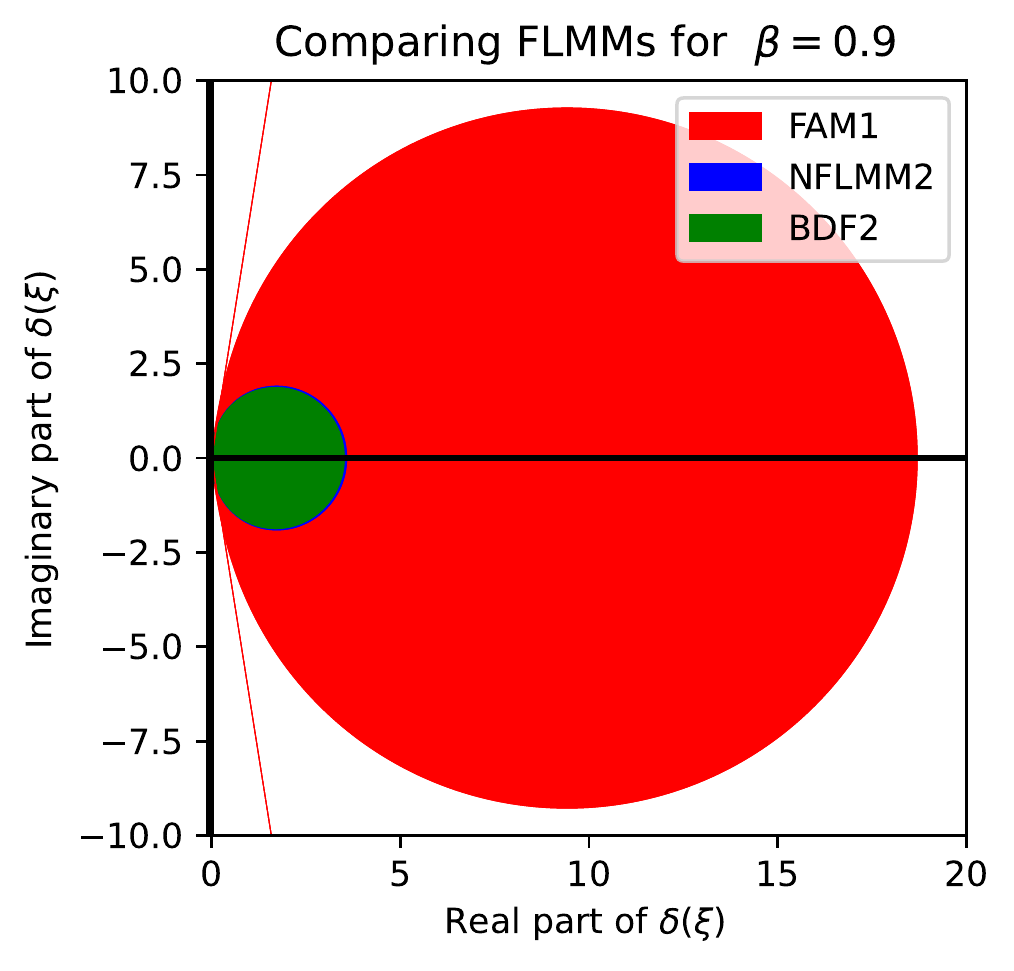} \\
(c)  $\beta = 0.75$ & (d)  $\beta = 0.90$
  \end{tabular}
  \caption{Comparing the unstable regions for NFLMM2 with other FLMMs for
  for $\beta = 0.25,0.50, 0.75, 0.90 $}\label{Fig_Comp_Stab}
\end{figure}
In Figure \ref{Fig_Comp_Stab}, the unstable regions for these FLMMs and our NFLMM2 are shaded for various values of $\beta $.
Note that the straight lines in the figures depicts the boundary of the stability region of the FT2 method in which
the left side of the lines are the stability regions
which are also correspond to the boundary of the analytical stability regions $\Sigma_\beta $. The unstable regions of FT2 arenot shaded for clarity.

The advantage of our NFLMM2 is, in terms of the unstable regions (UR), is that the UR of the NFLMM2 is smaller than that of FAM1 and is very much close to the UR of the FBDF2. Also, the UR of the FT2 is the largest among all the URs.

We note this from the observation that for the unstable regions ( see also the figures in Figure \ref{Fig_Comp_Stab} )
\[
\delta_{FBDF2}(-1) < \delta_{NFLMM2}(-1) < \delta_{FAM1}(-1) < \delta_{FT2}(-1)=+\infty .
\]

Another interesting observation is that, as $\beta $ approaches 1, the UR of FAM1 rapidly expands to the unbounded UR of FT2 while the UR of our NFLMM2 gets closer to the bounded UR of FBDF2 with very slow expnasion.

This is confirmed from the fact, as $\beta$ approaches 1,  that the generating function of the NFLMM2 converges to that of the FBDF2 while the generating function of the FAM1 converges to that of the FT2.

As for computational efficiency,
the weights $w_k$ of NFLMM2 has the simplest computational effort as they involve only a linear combinations the Gr\"unwald weights $g_k^{(\beta)} $ (see \eqref{Eq:FLMM_weights}).

Obviously, the weights of FBDF2 requires computations
using the Miller's formula (see eg. \cite{galeone2008fractional} ) with two previous weights.

The weights of FAM1 can be computed with the same amount of computation as that of NFLMM2. However, the right side of FAM1 scheme requires two coefficients from the Newton-Gregory expansion \cite{galeone2008fractional}.

Finally, the weights of FT2 need more efforts as they
require the first $n$ coefficients  of its generating function and requires FFT to compute \cite{garrappa2015trapezoidal}.

\normalsize

\section{Conclusion}\label{Sec_Conclusion}

We proposed and analysed a new FLMM of order two for FIVPs that falls under a new type of FLMM that is different from previously known types. The new FLMM is $A$-stable as the other known order two methods. However, the proposed method outweighs the other methods in terms of stability and/or computational cost.

\bibliographystyle{acm}

\bibliography{FLMM}

\end{document}